\newtheorem{theorem}{Theorem}[section]
\newtheorem{lemma}[theorem]{Lemma}
\theoremstyle{definition}
\theoremstyle{remark}
\numberwithin{equation}{section}
\begin{document} 
	\title[Relativistic BGK model in the FLRW spacetime]{Relativistic BGK model for massless particles in the FLRW spacetime}
	\author[B.-H. Hwang]{Byung-Hoon Hwang}
	\address{Department of Mathematics, Sungkyunkwan University, Suwon 440-746, Republic of Korea}
	\email{bhh0116@skku.edu}
	
	\author[H. Lee]{Ho Lee}
	\address{Department of Mathematics and Research Institute for Basic Science, Kyung Hee University, Seoul, 02447, Republic of Korea}
	\email{holee@khu.ac.kr}
	
	\author[S.-B. Yun]{Seok-Bae Yun}
	\address{Department of Mathematics, Sungkyunkwan University, Suwon 440-746, Republic of Korea}
	
	\email{sbyun01@skku.edu}
	
	\keywords{kinetic theory of gases, relativistic Boltzmann equation, relativistic BGK model, Anderson-Witting model, FLRW spacetime}
	\begin{abstract}	
			In this paper, we address the Cauchy problem for the relativistic BGK model proposed by Anderson and Witting for massless particles in the Friedmann-Lema{\^i}tre-Robertson-Walker (FLRW) spacetime.   
	\end{abstract}
	\maketitle \vspace{-0.8cm}
	\section{Introduction} 
	\subsection{Relativistic BGK model}	
We consider the Anderson and Witting type relativistic BGK model \cite{AW}:
\begin{equation}\label{Original AW}
p^\alpha\frac{\partial F}{\partial x^\alpha} -\Gamma^\alpha_{\beta\gamma}p^\beta p^\gamma \frac{\partial F}{\partial p^\alpha}=-\frac{U^\mu p_\mu}{c^2\tau}\left(J(F)-F \right).
\end{equation}   
Here $F\equiv F(x^\alpha,p^\alpha)$ is the distribution function representing a number density of particles in the phase space spanned by the spacetime coordinates $x^\alpha$ and four-momentum $p^\alpha$.
 The Greek indices run from 0 to 3 and the repeated Greek indices are assumed to be summed over its whole range. In \eqref{Original AW}, $\Gamma^\alpha_{\beta\gamma}$ denote the Christoffel symbols, $U^\mu$ is the Landau-Lifshitz four-velocity, $c$ is the speed of light, $\tau$ is the characteristic time of order of the mean free time, and $J(F)$ is the J\"{u}ttner distribution \cite{Juttner,Juttner 2} which takes the form of 
$$
J(F)=\begin{cases}
\frac{g_s}{h^3}\exp\left\{\frac{\mu_E}{kT_E}+\frac{U_E^{\alpha} p_\alpha}{kT_E}\right\} & \text{for the relativistic Maxwell-Boltzmann statistics},\cr
\frac{g_s/h^3}{\exp\left\{-\frac{\mu_E}{kT_E}-\frac{U_E^{\alpha} p_\alpha}{kT_E}\right\}-1} & \text{for the relativistic Bose-Einstein statistics},
\end{cases}
$$
where $h$ is the Planck constant, $k$ is the Boltzmann constant, $g_s$ is the degeneracy factor, $\mu_E$ is the chemical potential, $U_E^\alpha=(\sqrt{c^2+|U_E|^2},U_E)$ is the four-velocity, and $T_E$ is the equilibrium temperature. 
Note that $\mu_E$, $U_E$ and $T_E$ are functions of $t$ and $x$ determined by the relations
\begin{align*} 
		U^\mu\int_{\mathbb{R}^3}p_\mu\left(J(F)-F \right)\sqrt{-|\eta|}\, \frac{dp}{p^0}=0,\qquad  U^\mu\int_{\mathbb{R}^3} p_\mu p^\nu\left(J(F)-F \right)\sqrt{-|\eta|}\, \frac{dp}{p^0}=0,
 \end{align*}
where $|\eta|$ denotes the determinant of the metric tensor $\eta_{\alpha\beta}$ and $dp = dp^1 dp^2 dp^3$, so that the conservation laws of particle number, momentum and energy for \eqref{Original AW} hold true
\begin{align}\label{conservation laws}
	{N^\alpha}_{;\alpha}=\partial_\alpha N^\alpha+\Gamma^{\alpha}_{\alpha\mu}N^\mu=0,\qquad {T^{\alpha\beta}}_{;\alpha}=\partial_\alpha T^{\alpha\beta}+\Gamma^{\alpha}_{\alpha\mu}T^{\mu\beta}+\Gamma^{\beta}_{\alpha\mu}T^{\alpha\mu}=0.
\end{align}

The BGK model \cite{BGK,Wal} is the most well-known model equation of the classical Boltzmann equation. Three relativistic generalizations have been proposed respectively by Marle \cite{Mar1,Mar3}, Anderson and Witting \cite{AW}, and recently by Pennisi and Ruggeri \cite{PR}, which have been widely applied to various physical problems \cite{FRS,FRS2,JRS,MKSH,MNR,MW}. The first mathematical analysis for relativistic BGK models was carried out in \cite{BCNS} where the unique determination of equilibrium variables, the scaling limits, and the linearization problem were studied for the Marle model. For the existence theory of the nonlinear Marle model, we refer to \cite{BNU} for near-equilibrium solutions, \cite{HY1} for stationary solutions, and \cite{CJS} for weak solutions. In the case of the Anderson-Witting model, the unique determination of equilibrium variables, and the global existence and asymptotic behavior of near-equilibrium solutions were studied in \cite{HY2}, and the stationary problem was covered in \cite{HY3}. Recently, the Pennisi-Ruggeri model for polyatomic gases was studied in \cite{HRY} where the unique determination of equilibrium variables, and the global existence and asymptotic behavior of near-equilibrium solutions were addressed.  To the best knowledge of authors, the BGK model has not been much studied in the context of cosmology, which is the main motivation of the current work.


\subsection{Reduction in the case of the  FLRW  spacetime}
In this paper, we are concerned with the Cauchy problem of the Anderson-Witting model \eqref{Original AW}   for massless particles in an isotropic and spatially homogeneous spacetime, namely the Friedmann-Lema{\^i}tre-Robertson-Walker (FLRW) spacetime. As such, the distribution function is also assumed to be isotropic and spatially homogeneous. Throughout the paper, we set all physical constants to be unity except for particle mass $(m=0)$ for brevity. In the FLRW case, the metric tensor $\eta_{\mu\nu}$ is given by
$$
\eta_{00}=\eta^{00}=-1,\qquad \eta_{ij} = R^2\delta_{ij},\qquad \eta^{ij} = R^{-2}\delta^{ij}
$$
for $1\leq i,j\leq 3$, where $\delta_{ij}$ and $\delta^{ij}$ are the Kronecker delta, and $\eta^{\mu\nu}$ is the inverse of $\eta_{\mu\nu}$. Hence the four momentum $p^\alpha$ and its covariant components $v_\alpha:=\eta_{\alpha\beta} p^\beta(=p_\alpha)$ are defined by 
\begin{equation}\label{contra co}
p^\alpha=\left(R|p|,p\right), \qquad v_\alpha=\left(-p^0,R^2p\right)=\left(-R^{-1}|v|, v \right),
\end{equation}
due to the mass shell condition  $$\eta_{\alpha\beta}p^\alpha p^\beta =-m^2 = \eta^{\alpha\beta} v_\alpha v_\beta$$ with $m=0$. Here  $R=R(t)$ denotes the cosmic scale factor that is determined by the following two equations (called the Friedmann and the acceleration equations respectively)
$$
\left(\frac{\dot{R}}{R}\right)^2=\frac{8\pi }{3}en,\qquad \frac{\ddot{R}}{R}=-\frac{4\pi }{3}\left\{en+3P \right\}
$$
where the dot denotes the derivative with respect to $t$. In the massless case, the scale factor is easily solved from the above equations since 
$$
\left(\frac{\dot{R}}{R}\right)^2 +\frac{\ddot{R}}{R}=0
$$ (see \eqref{contra macroscopic} below) so that it depends only on $R(0)$ and $\dot{R}(0)$. Hence, we may assume that the scale factor is given by 
$$
R = C(t +t_0)^{1/2}
$$ for some positive constants $C$ and $t_0$.
To define the macroscopic quantities $e,n$ and $P$, we consider the particle four-flow $N^\alpha$ and energy-momentum tensor $T^{\alpha\beta}$: 
\begin{equation*}
N^\alpha=\int_{\mathbb{R}^3} p^\alpha F\,\sqrt{-|\eta|}\frac{dp}{p^0},\qquad  T^{\alpha\beta}=\int_{\mathbb{R}^3} p^\alpha p^\beta F\,\sqrt{-|\eta|}\frac{dp}{p^0}.
\end{equation*}
In  the Landau-Lifshitz frame \cite{LL} with the isotropy assumption, both $N^\alpha$ and $T^{\alpha\beta}$ are decomposed as
$$
N^\alpha=nU^\alpha,\qquad T^{\alpha\beta}=(en+p)U^\alpha U^\beta+p\eta^{\alpha\beta}.
$$
Here the particle number density $n$, the Landau-Lifshitz four-velocity $U^\alpha$, the internal energy per particle $e$, and the pressure $P$ are defined as follows
\begin{align}\label{contra macroscopic}
\begin{split}
n&=R^3\int_{\mathbb{R}^3}F \,dp,\cr 
U ^\alpha &=\frac{R^3}{n}\int_{\mathbb{R}^3} p^\alpha F\, \frac{dp}{p^0}=(1,0,0,0),\cr 
e  &=\frac{R^4}{n}\int_{\mathbb{R}^3}|p| F \,dp =R\frac{\int_{\mathbb{R}^3}|p| F \,dp}{\int_{\mathbb{R}^3}F \,dp},\cr 
P &=\frac{R^5}{3}\int_{\mathbb{R}^3}|p|^2F\,\frac{dp}{p^0},
\end{split}\end{align}
where we used $\sqrt{-|\eta|} = R^3$ and \eqref{contra co}. In the FLRW case, the nonzero Christoffel symbols are
\begin{equation*}
\Gamma^0_{ij}=R\dot{R}\delta_{ij},\qquad\Gamma^{i}_{j0}=\Gamma^i_{0j}=\frac{\dot{R}}{R}\delta^i_j
\end{equation*}
for $1\leq i,j\leq 3$. In conclusion, the Cauchy problem of the Anderson-Witting model \eqref{Original AW} for massless particles in the FLRW spacetime is reduced into
\begin{align}\label{AW in RW}\begin{split}  
 		\partial_tF-2\frac{\dot{R}}{R}p\cdot\nabla_p F&= J(F)-F,\cr
 			F(0,p)&=F_0(p)
\end{split} \end{align} 
where the scale factor is
$$
R = C(t +t_0)^{1/2}.
$$ 
We then consider the characteristic curve of \eqref{AW in RW}:
\begin{equation}\label{characteristic}
\frac{dp}{dt}=-2\frac{\dot{R}}{R}p,\qquad p(0)=y
\end{equation}
which can be solved explicitly: 
$$
p(t)=R^{-2}(t)y.
$$
Therefore, in terms of the covariant variable $v=R^2(t)p$,  \eqref{AW in RW} can be simplified further into
\begin{align}\label{AW in RW 2}\begin{split}  
		\partial_t F(t,v)&= J(F)(t,v)-F(t,v),\cr	
		F(0,v)&=F_0(v)
\end{split} \end{align}
where the J\"{u}ttner distribution $J(F)$ is written as
\begin{equation}\label{general J(F) 2} 
J(F)= 
\exp\left\{\frac{\mu_E}{T_E}+\frac{U_E^{\alpha} v_\alpha}{T_E}\right\}
\end{equation}
for the relativistic Maxwell-Boltzmann statistics, and
\begin{equation}\label{general J(F) 3} 
J(F)=
\frac{1}{\exp\left\{-\frac{\mu_E}{T_E}-\frac{U_E^{\alpha} v_\alpha}{T_E}\right\}-1} 
\end{equation}
for the relativistic Bose-Einstein statistics.
The Cauchy problem \eqref{AW in RW 2} is not as simple as it looks: Due to the specific structure of $J(F)$, the Cauchy problem \eqref{AW in RW 2} must be understood to be coupled to the nonlinear relation \eqref{v cgamma} in the case of the relativistic Bose-Einstein statistics:
\begin{align*}   
	&\hspace{0.5cm}\partial_tF= \frac{1}{\exp\{c+\gamma|v|\}-1}-F,\cr	
		&\hspace{1.5cm}\beta(c )=\frac{\rho(F) }{\left(3T(F) \right)^3},\\ 
		&\gamma =\left\{\rho(F)\right\} ^{-\frac{1}{3}}\left(\int_{\mathbb{R}^3} \frac{1}{\exp\left\{c +|v|\right\}-1}   \,dv\right)^{\frac{1}{3}}, 
 \end{align*}
  which makes the problem \eqref{AW in RW 2} a highly nontrivial one (See Lemma \ref{form of J(F) 2} and (\ref{v AW 2})).\noindent\newline

To state our main result, we define two types of global equilibriums
\begin{equation}\label{both J^0}
J^0=\begin{cases}
\exp\left\{-|v|\right\} & \text{for the relativistic Maxwell-Boltzmann statistics},\cr
\frac{1}{\exp\left\{1+|v|\right\}-1} & \text{for the relativistic Bose-Einstein statistics. }
\end{cases}
\end{equation}
Our main result is as follows.
\begin{theorem}\label{main}
Assume $F_0 = F_{0}(v)$ is nonnegative, and $F_0$ and $J^0$ share the total particle number and energy in the following sense: 
	\begin{equation}\label{initial} 
		\int_{\mathbb{R}^3} F_0\,dv=\int_{\mathbb{R}^3} J^0\,dv,\qquad \int_{\mathbb{R}^3} |v|F_0\,dv=\int_{\mathbb{R}^3} |v|J^0\,dv.
	\end{equation} 	
Then the Cauchy problem \eqref{AW in RW 2} is explicitly solved as follows
\begin{align*}
	F(t,v)&=\exp(-t)F_0(v)+\left\{1-\exp(-t)\right\}J^0.
	\end{align*}
\end{theorem}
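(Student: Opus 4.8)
The plan is to reduce the nonlinear Cauchy problem \eqref{AW in RW 2} to a linear scalar ODE by showing that the J\"uttner distribution $J(F)$ stays frozen at the global equilibrium $J^0$ throughout the evolution. The key structural fact is that, once written in the covariant variable $v$, equation \eqref{AW in RW 2} is a pointwise-in-$v$ ordinary differential equation in $t$, and the only coupling to the nonlinearity enters through the two moments of $F$ that determine the equilibrium parameters. So if those moments can be pinned down for all time, the equation linearizes.

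First I would show that the number and energy moments of any solution are conserved in time. The defining relations for the equilibrium variables (the orthogonality conditions stated in the introduction) specialize, in the isotropic massless FLRW setting with $U^\alpha=(1,0,0,0)$, to $\int_{\mathbb{R}^3}(J(F)-F)\,dv=0$ and $\int_{\mathbb{R}^3}|v|(J(F)-F)\,dv=0$, the spatial components being odd in $v$ and hence vanishing by isotropy. Integrating \eqref{AW in RW 2} against $1$ and against $|v|$ then gives $\frac{d}{dt}\int_{\mathbb{R}^3} F\,dv=0$ and $\frac{d}{dt}\int_{\mathbb{R}^3}|v|F\,dv=0$, so both moments remain equal to their initial values. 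By the matching hypothesis \eqref{initial}, these conserved values coincide for all $t$ with the corresponding moments of $J^0$.

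Next I would invoke the unique determination of the J\"uttner distribution from its moments, Lemma \ref{form of J(F) 2}: the two free scalar parameters of $J(F)$ (the spatial part of $U_E$ vanishes by isotropy, leaving $\mu_E$ and $T_E$) are fixed uniquely by the number and energy densities. Since $J^0$ is itself the J\"uttner distribution carrying exactly those moments, with $T_E=1$ and the chemical potential read off from \eqref{both J^0}, uniqueness forces $J(F)(t,v)=J^0(v)$ for every $t$, in both the Maxwell--Boltzmann and the Bose--Einstein cases. This is the crux of the argument and the step I expect to be the main obstacle, as it rests entirely on the injectivity of the moment-to-parameter map supplied by Lemma \ref{form of J(F) 2}, together with the verification that the normalizations in \eqref{both J^0} are precisely the matched ones.

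Once $J(F)\equiv J^0$, the problem collapses to the linear equation $\partial_t F = J^0 - F$ at each fixed $v$. Multiplying by the integrating factor $e^{t}$ and integrating from $0$ to $t$ yields $F(t,v)=e^{-t}F_0(v)+(1-e^{-t})J^0(v)$, which is at once the existence statement and, by uniqueness for the scalar linear ODE, the only solution. Finally I would record the consistency checks: since $0\le e^{-t}\le 1$ and both $F_0$ and $J^0$ are nonnegative, $F(t,\cdot)\ge 0$ is preserved, and a direct computation of $\int_{\mathbb{R}^3} F(t)\,dv$ and $\int_{\mathbb{R}^3}|v|F(t)\,dv$ confirms that they remain those of $J^0$, fully consistent with the identity $J(F)\equiv J^0$ used above.
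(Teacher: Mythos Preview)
Your argument is correct for the Maxwell--Boltzmann case and matches the paper's proof there almost verbatim: conservation of the two moments pins down $\rho$ and $T$, Lemma~\ref{form of J(F)} gives $J(F)=J^0$, and the linear ODE integrates to the stated formula.

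For the Bose--Einstein case the paper takes a different route, and the difference is not merely cosmetic. Your derivation of the conservation laws in the second paragraph presupposes that $J(F)$ is already well defined along the solution, but by Lemma~\ref{form of J(F) 2} this requires the constraint \eqref{Apery 2} to hold for $F(t,\cdot)$ at every $t$---something you cannot assert before you know the solution exists. The paper sidesteps this circularity by running an explicit iteration scheme \eqref{v AW 2}: at each stage one checks that the previous iterate has the moments of $J^0$, hence $\rho/(3T)^3=\beta(1)$ lies strictly inside the admissible range, so $c^n=\gamma^n=1$ and the next equilibrium is again $J^0$; the iterates then all coincide with the explicit formula. Your approach can be made rigorous with a small reorganization: treat the first two paragraphs as heuristics, \emph{define} $F(t,v)=e^{-t}F_0(v)+(1-e^{-t})J^0(v)$, compute its moments directly (they equal those of $J^0$ by \eqref{initial}), observe that $\rho(F)/(3T(F))^3=\beta(1)$ satisfies \eqref{Apery 2}, invoke Lemma~\ref{form of J(F) 2} to get $J(F)=J^0$, and then verify $\partial_t F=J^0-F$. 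That is a clean existence proof; uniqueness then follows from your a priori argument applied to any solution for which $J(F)$ is defined. The paper's iteration buys a self-contained construction with no circularity; your verification-after-guess is shorter once the logical order is fixed.
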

As is mentioned above, mathematical analysis for BGK models in cosmological framework has never been made in the literature. We refer to \cite{MW} for the study of exact solutions and to \cite{baz1,baz2} in the Boltzmann case. For an introduction to the Boltzmann or Vlasov equation for massless particles we refer to \cite{BFH,LNT,LNT2,T03}.\newline

This paper is organized as follows. In Section 2, we find an explicit form of J\"{u}ttner distribution of \eqref{general J(F) 2} and \eqref{general J(F) 3} for massless particles in the FLRW spacetime. In Section 3, we investigate the iteration scheme for \eqref{AW in RW 2} to prove Theorem \ref{main}. 
\noindent
\newline

\section{Determination of $J(F)$}
Recall from \eqref{general J(F) 2} and \eqref{general J(F) 3} that  the J\"{u}ttner distribution $J(F)$  has unknown variables $\mu_E, U_E$ and $T_E$, and using the covariant variable $v$ of \eqref{contra co}, unknown variables are determined through the relation
\begin{align}\label{cancellation 2}\begin{split}
U^\alpha\int_{\mathbb{R}^3}v_\alpha\left(J(F)-F \right)\frac{1}{\sqrt{-|\eta|}}\, \frac{dv}{v_0}&=0,\cr 
U^\alpha\int_{\mathbb{R}^3} v_\alpha \eta^{\mu\beta}v_\mu\left(J(F)-F \right)\frac{1}{\sqrt{-|\eta|}}\, \frac{dv}{v_0}&=0
\end{split}\end{align}
so that the conservation laws \eqref{conservation laws} hold true. In the following lemma, we investigate the explicit form of $J(F)$ using the relation \eqref{cancellation 2} for the case of massless particles in the FLRW spacetime.

\begin{lemma}\label{form of J(F)}
The explicit form of \eqref{general J(F) 2} satisfying \eqref{cancellation 2} is given by
\begin{equation*}
	J(F)=
		\frac{\rho  }{8\pi T ^3} \exp\left\{-\frac{|v|}{T }\right\} 
	\end{equation*}
where $\rho $ and $T $ denote 
\begin{align*}
	\rho = \int_{\mathbb{R}^3}F \,dv,\qquad 3 T =   \frac{\int_{\mathbb{R}^3}|v| F \,dv}{\int_{\mathbb{R}^3}F \,dv}.
\end{align*}
\end{lemma}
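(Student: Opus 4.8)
The plan is to turn the two tensorial constraints \eqref{cancellation 2} into a pair of scalar moment identities by exploiting the explicit Landau--Lifshitz four-velocity $U^\alpha=(1,0,0,0)$ and the massless relation $v_\alpha=(-R^{-1}|v|,v)$ from \eqref{contra co}. Since $U^\alpha v_\alpha=v_0$ cancels the factor $1/v_0$ in the measure, and $\sqrt{-|\eta|}=R^3$ is independent of $v$, the first relation in \eqref{cancellation 2} collapses to the number identity $\int_{\mathbb{R}^3}J(F)\,dv=\int_{\mathbb{R}^3}F\,dv=\rho$. For the second relation I would again contract with $U^\alpha$, use $\eta^{\mu\beta}v_\mu=v^\beta=p^\beta$, and read off the components separately: the $\beta=0$ component yields the energy identity $\int_{\mathbb{R}^3}|v|J(F)\,dv=\int_{\mathbb{R}^3}|v|F\,dv$, while the three spatial components $\beta=i$ give $\int_{\mathbb{R}^3}v^iJ(F)\,dv=\int_{\mathbb{R}^3}v^iF\,dv=0$, the last equality following from the isotropy of $F_0$.

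The three spatial constraints are exactly what pins down the equilibrium velocity. The only anisotropic factor in \eqref{general J(F) 2} is $\exp\{(U_E^iv_i)/T_E\}$, so the vanishing of the first moment forces the spatial part of $U_E^\alpha$ to be zero; hence $U_E^\alpha=(1,0,0,0)$ and $U_E^\alpha v_\alpha=v_0=-R^{-1}|v|$ depends on $v$ only through $|v|$. Substituting into \eqref{general J(F) 2} reduces the J\"uttner distribution to the isotropic two-parameter form
\begin{equation*}
J(F)=a\,\exp\left\{-b|v|\right\},\qquad a:=\exp\left\{\frac{\mu_E}{T_E}\right\},\quad b:=\frac{1}{R\,T_E},
\end{equation*}
which absorbs the scale factor cleanly into a single effective inverse temperature $b$.

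It then remains to determine $a$ and $b$ from the number and energy identities. Passing to spherical coordinates and using $\int_0^\infty r^n e^{-br}\,dr=n!/b^{n+1}$, I would evaluate $\int_{\mathbb{R}^3}a e^{-b|v|}\,dv=8\pi a/b^3$ and $\int_{\mathbb{R}^3}|v|a e^{-b|v|}\,dv=24\pi a/b^4$. Dividing these two moments eliminates $a$ and leaves $3/b$, which the ratio $3T=\int_{\mathbb{R}^3}|v|F\,dv/\int_{\mathbb{R}^3}F\,dv$ forces to equal $3T$, so $b=1/T$; the number identity $8\pi a/b^3=\rho$ then gives $a=\rho/(8\pi T^3)$. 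Assembling these yields precisely $J(F)=\frac{\rho}{8\pi T^3}\exp\{-|v|/T\}$.

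The step I expect to be the main obstacle is the reduction itself, namely carefully tracking the FLRW metric factors ($\sqrt{-|\eta|}=R^3$, $\eta^{ij}=R^{-2}\delta^{ij}$, $v_0=-R^{-1}|v|$) through the two contractions, and confirming that the spatial moment conditions genuinely \emph{force} $U_E=0$ rather than merely being consistent with it. Once the distribution is known to have the isotropic form $a e^{-b|v|}$, the remaining computation is routine $\Gamma$-function bookkeeping.
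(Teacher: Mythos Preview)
Your proposal is correct and follows essentially the same route as the paper: reduce \eqref{cancellation 2} to the two scalar moment identities \eqref{cancel 2}, plug in the isotropic form $a\,e^{-b|v|}$, and solve the resulting $2\times 2$ system via the Gamma integrals $\int_0^\infty r^n e^{-br}\,dr$. The only minor difference is that the paper simply \emph{takes} $U_E^\alpha=U^\alpha=(1,0,0,0)$ at the outset by appeal to \eqref{contra macroscopic}, whereas you derive it from the vanishing spatial first moments; your version is slightly more self-contained but otherwise identical.
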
 

\begin{proof} Taking $U_E^\alpha=U^\alpha$ (see \eqref{contra macroscopic}), then \eqref{general J(F) 2} becomes
	\begin{equation}\label{v J(F)}
	J(F)= 
	\exp\left\{\frac{\mu_E}{T_E}-\frac{|v|}{RT_E}\right\}
	\end{equation}
	and \eqref{cancellation 2} reduces to
	\begin{align}\label{cancel 2}\begin{split}
	 \int_{\mathbb{R}^3}J(F)\, dv&= \int_{\mathbb{R}^3}F\, dv,\cr
	    \left(\int_{\mathbb{R}^3}  |v| J(F) \, dv,0,0,0\right)&=  \left(\int_{\mathbb{R}^3}  |v| F \, dv,0,0,0\right)
\end{split}	\end{align}
	due to the isotropic property of $F$. Inserting \eqref{v J(F)} into \eqref{cancel 2}, we have
	\begin{align*}
		8\pi R^3T_E^3 \exp\left\{\frac{\mu_E}{T_E}\right\}&=\rho,\cr 
		24\pi R^4T_E^4\exp\left\{\frac{\mu_E}{T_E}\right\}&= 3 \rho T,
	\end{align*}
which leads to
	\begin{equation}\label{J 1}
T_E=R^{-1}T,\qquad \exp\left\{\frac{\mu_E}{T_E} \right\}=\frac{\rho}{8\pi T^3}.
\end{equation}
	Putting \eqref{J 1} into \eqref{v J(F)} gives the desired result.
\end{proof}

\begin{lemma}\label{form of J(F) 2}
The explicit form of \eqref{general J(F) 3}  satisfying \eqref{cancellation 2} is given by
	\begin{equation*}
		J(F)=			\frac{1}{\exp\left\{c +\gamma  |v|\right\}-1}  
 	\end{equation*}
	where $c $ and $\gamma $ are determined by the relations
	\begin{align}\label{v cgamma}\begin{split}
		&\beta(c )\equiv\frac{\left\{\int_{\mathbb{R}^3} \frac{1}{\exp\left\{c +|v|\right\}-1}   \,dv \right\}^4}{\left\{\int_{\mathbb{R}^3} \frac{|v|}{\exp\left\{c +|v|\right\}-1}   \,dv \right\}^3}=\frac{\rho }{(3T )^3},\cr 
		&\gamma =\rho ^{-\frac{1}{3}}\left(\int_{\mathbb{R}^3} \frac{1}{\exp\left\{c +|v|\right\}-1}   \,dv\right)^{\frac{1}{3}}.
\end{split}	\end{align}
	If $c$ is positive, and $F$ satisfies  
	\begin{equation}\label{Apery 2}
	0<\frac{\rho }{(3T )^3}<    \frac{8\pi}{27}\frac{\left\{\sum_{k=1}^\infty \frac{1}{k^{3}}\right\}^4}{\left\{\sum_{k=1}^\infty \frac{1}{k^{4}}\right\}^3},
	\end{equation}
	then \eqref{v cgamma} determines unique $c$ and $\gamma$.
\end{lemma}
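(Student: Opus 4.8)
The plan is to mirror the computation in Lemma~\ref{form of J(F)} to pin down the form of $J(F)$ together with the defining relations \eqref{v cgamma}, and then to carry out a separate one-variable monotonicity analysis to extract uniqueness. As in Lemma~\ref{form of J(F)} I would set $U_E^\alpha=U^\alpha=(1,0,0,0)$, so that by \eqref{contra co} one has $U_E^\alpha v_\alpha=-R^{-1}|v|$ and \eqref{general J(F) 3} collapses to $J(F)=\bigl(\exp\{c+\gamma|v|\}-1\bigr)^{-1}$ with the abbreviations $c=-\mu_E/T_E$ and $\gamma=(RT_E)^{-1}$. By the isotropy of $F$, the constraints \eqref{cancellation 2} reduce exactly as in \eqref{cancel 2} to the two scalar identities $\int_{\mathbb{R}^3}J(F)\,dv=\rho$ and $\int_{\mathbb{R}^3}|v|J(F)\,dv=3T\rho$ (the latter using $\int_{\mathbb{R}^3}|v|F\,dv=3T\rho$). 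Rescaling by $w=\gamma v$ turns these into $\gamma^{-3}I_0(c)=\rho$ and $\gamma^{-4}I_1(c)=3T\rho$, where $I_0(c)=\int_{\mathbb{R}^3}(e^{c+|w|}-1)^{-1}\,dw$ and $I_1(c)=\int_{\mathbb{R}^3}|w|(e^{c+|w|}-1)^{-1}\,dw$. The first identity is precisely the stated formula for $\gamma$; eliminating $\gamma$ via $(\gamma^3)^4=(\gamma^4)^3$ yields $I_0(c)^4/I_1(c)^3=\rho/(3T)^3$, that is $\beta(c)=\rho/(3T)^3$, which settles the first half of the lemma.

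For the uniqueness statement the task is to analyze $\beta$ as a function of $c$ on $(0,\infty)$. For $c>0$ I would expand via the geometric series $(e^{c+r}-1)^{-1}=\sum_{k\ge1}e^{-k(c+r)}$ and integrate termwise (using $\int_0^\infty r^2 e^{-kr}\,dr=2/k^3$ and $\int_0^\infty r^3 e^{-kr}\,dr=6/k^4$) to obtain $I_0(c)=8\pi\,\mathrm{Li}_3(e^{-c})$ and $I_1(c)=24\pi\,\mathrm{Li}_4(e^{-c})$, where $\mathrm{Li}_s(q)=\sum_{k\ge1}q^k k^{-s}$. This gives the closed form $\beta(c)=\tfrac{8\pi}{27}\,\mathrm{Li}_3(e^{-c})^4/\mathrm{Li}_4(e^{-c})^3$. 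Reading off the endpoints, $c\to0^+$ sends $e^{-c}\to1$, so $\beta(0^+)=\tfrac{8\pi}{27}\zeta(3)^4/\zeta(4)^3$, which is exactly the upper bound in \eqref{Apery 2}; whereas $c\to\infty$ gives $\mathrm{Li}_3(e^{-c})\sim\mathrm{Li}_4(e^{-c})\sim e^{-c}$, so $\beta(c)\sim\tfrac{8\pi}{27}e^{-c}\to0$.

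The crux, and where I expect the main obstacle, is proving that $\beta$ is strictly monotone. Writing $q=e^{-c}\in(0,1)$ and $f(q)=\mathrm{Li}_3(q)^4/\mathrm{Li}_4(q)^3$, the differentiation rule $\mathrm{Li}_s'(q)=q^{-1}\mathrm{Li}_{s-1}(q)$ gives $f'(q)=q^{-1}\mathrm{Li}_3(q)^3\mathrm{Li}_4(q)^{-4}\bigl(4\,\mathrm{Li}_2(q)\mathrm{Li}_4(q)-3\,\mathrm{Li}_3(q)^2\bigr)$, so the sign of $f'$ is governed by $4\,\mathrm{Li}_2\mathrm{Li}_4-3\,\mathrm{Li}_3^2$. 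The key input is the log-convexity of $s\mapsto\mathrm{Li}_s(q)$ for fixed $q$: since $\mathrm{Li}_s(q)=\sum_k q^k k^{-s}$, Cauchy--Schwarz applied to $q^{k/2}k^{-1}$ and $q^{k/2}k^{-2}$ yields the strict bound $\mathrm{Li}_3(q)^2<\mathrm{Li}_2(q)\mathrm{Li}_4(q)$, whence $4\,\mathrm{Li}_2\mathrm{Li}_4>4\,\mathrm{Li}_3^2>3\,\mathrm{Li}_3^2$ and $f'(q)>0$. Thus $f$ is strictly increasing in $q$ and $\beta$ is strictly decreasing in $c$; combined with the endpoint values this makes $\beta$ a continuous bijection from $(0,\infty)$ onto $\bigl(0,\tfrac{8\pi}{27}\zeta(3)^4/\zeta(4)^3\bigr)$. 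Consequently any $F$ satisfying \eqref{Apery 2} admits a unique $c>0$ with $\beta(c)=\rho/(3T)^3$, and the second relation in \eqref{v cgamma} then determines $\gamma$ uniquely.
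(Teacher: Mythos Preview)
Your proof is correct and follows essentially the same route as the paper: the reduction of \eqref{cancellation 2} to the two scalar moment conditions, the elimination of $\gamma$ to obtain $\beta(c)=\rho/(3T)^3$, the series expansion of the Bose--Einstein integrals, and the monotonicity argument via the Cauchy--Schwarz inequality $\mathrm{Li}_3(q)^2<\mathrm{Li}_2(q)\mathrm{Li}_4(q)$ are all identical to what the paper does. The only cosmetic differences are that the paper differentiates directly in $c$ (rather than in $q=e^{-c}$) and keeps the series $\sum_{k\ge1}e^{-ck}k^{-s}$ explicit instead of writing $\mathrm{Li}_s(e^{-c})$; in particular the paper's numerator $3\bigl(\sum e^{-ck}/k^3\bigr)^2-4\bigl(\sum e^{-ck}/k^2\bigr)\bigl(\sum e^{-ck}/k^4\bigr)$ is exactly your $3\,\mathrm{Li}_3^2-4\,\mathrm{Li}_2\mathrm{Li}_4$.
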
 

\begin{proof} $\bullet$ The explicit form of \eqref{general J(F) 3}: As in \eqref{v J(F)} of Lemma \ref{form of J(F)}, we can see that $J(F)$  satisfying \eqref{cancellation 2} is given by
	\begin{equation}\label{v J(F) 2}
	J(F)= \frac{1}{\exp\left\{c+\gamma |v|\right\}-1},  	
	\end{equation}
	where $c$ and $\gamma$ denote
	$$
	c=-\frac{\mu_E}{T_E}, \qquad \gamma=\frac{1}{RT_E}.
	$$
Inserting \eqref{v J(F) 2} into \eqref{cancel 2} gives 
	\begin{align}\label{cgamma 2}\begin{split}
		&\gamma^3\int_{\mathbb{R}^3}\frac{1}{\exp\left\{c+|v| \right\}-1}\,dv=\rho,\cr
		& 	\gamma^4\int_{\mathbb{R}^3}\frac{|v|}{\exp\left\{c+|v| \right\}-1}\, dv=3\rho T
	\end{split}\end{align}
	from which we can derive \eqref{v cgamma}.
	\noindent\newline
	$\bullet$ Unique determination of $c$ and $\gamma$: The integrals of the Bose-Einstein distribution can be represented by
	\begin{equation}\label{BE integral}
	\int_0^\infty \frac{r^n}{\exp\left\{r  \right\}/z-1}\,dr=n! \sum_{k=1}^\infty \frac{z^k}{k^{n+1}}
	\end{equation}
	which converges for $n\in\mathbb{N}$ and $z<1$. For $c>0$, we put $z=e^{-c}$ and apply \eqref{BE integral} to $\beta(c)$ of \eqref{v cgamma} to see 
	\begin{align}\begin{split}
	\beta(c)&= 4\pi \frac{\left\{\int_{0}^\infty\frac{r^2}{\exp\left\{c+r \right\}-1}\,dr\right\}^4}{\left\{\int_{0}^\infty\frac{r^3}{\exp\left\{c+r \right\}-1}\,dr\right\}^3}= 4\pi\frac{\left\{2!\sum_{k=1}^\infty \frac{\exp\{-ck\}}{k^{3}}\right\}^4}{\left\{3!\sum_{k=1}^\infty \frac{\exp\{-ck\}}{k^{4}}\right\}^3}\cr
	&=\frac{8\pi}{27} \frac{\left\{\sum_{k=1}^\infty \frac{\exp\{-ck\}}{k^{3}}\right\}^4}{\left\{\sum_{k=1}^\infty \frac{\exp\{-ck\}}{k^{4}}\right\}^3}.
	\end{split}\end{align}
	Now we	differentiate $\beta(c)$ with respect to $c$ and use the following relation 
	\begin{equation*}
		\frac{\partial}{\partial c}\left\{\sum_{k=1}^\infty \frac{\exp\{-ck\}}{k^{n}}\right\}=- \sum_{k=1}^\infty \frac{\exp\{-ck\}}{k^{n-1}}
	\end{equation*}
	to obtain
	\begin{align*}
		&\frac{d}{dc}	\beta(c)\cr
		&=\frac{8\pi}{27} \left[-4\sum_{k=1}^\infty \frac{\exp\{-ck\}}{k^{2}}\frac{\left\{\sum_{k=1}^\infty \frac{\exp\{-ck\}}{k^{3}}\right\}^3}{\left\{\sum_{k=1}^\infty \frac{\exp\{-ck\}}{k^{4}}\right\}^3}+3\sum_{k=1}^\infty \frac{\exp\{-ck\}}{k^{3}}\frac{\left\{\sum_{k=1}^\infty \frac{\exp\{-ck\}}{k^{3}}\right\}^4}{\left\{\sum_{k=1}^\infty \frac{\exp\{-ck\}}{k^{4}}\right\}^4}  \right]  	\cr
		&=  \beta(c)\left[-4\frac{\sum_{k=1}^\infty \frac{\exp\{-ck\}}{k^{2}}}{\sum_{k=1}^\infty \frac{\exp\{-ck\}}{k^{3}}}+3\frac{\sum_{k=1}^\infty \frac{\exp\{-ck\}}{k^{3}}}{\sum_{k=1}^\infty \frac{\exp\{-ck\}}{k^{4}}}  \right]\cr
		&=   \beta(c)\left\{\frac{3\left(\sum_{k=1}^\infty \frac{\exp\{-ck\}}{k^{3}}\right)^2-4\sum_{k=1}^\infty \frac{\exp\{-ck\}}{k^{2}}\sum_{k=1}^\infty \frac{\exp\{-ck\}}{k^{4}}}{\sum_{k=1}^\infty \frac{\exp\{-ck\}}{k^{3}}\sum_{k=1}^\infty \frac{\exp\{-ck\}}{k^{4}}} \right\}
	\end{align*}
	which is strictly negative since
	\begin{align*}
		\left(\sum_{k=1}^\infty \frac{\exp\{-ck\}}{k^{3}}\right)^2\le \sum_{k=1}^\infty \frac{\exp\{-ck\}}{k^{2}}\sum_{k=1}^\infty \frac{\exp\{-ck\}}{k^{4}}.
	\end{align*}
	Thus $\beta(c)$ is stricitly decreasing on $c\in (0,\infty)$. On the other hand, the limiting behavior of $\beta(c)$ is 
	\begin{align*}
	\lim_{c\rightarrow 0}\beta(c)&=\frac{8\pi}{27}\lim_{c\rightarrow 0} \frac{\left\{\sum_{k=1}^\infty \frac{\exp\{-ck\}}{k^{3}}\right\}^4}{\left\{\sum_{k=1}^\infty \frac{\exp\{-ck\}}{k^{4}}\right\}^3}=\frac{8\pi}{27}\frac{\left\{\sum_{k=1}^\infty \frac{1}{k^{3}}\right\}^4}{\left\{\sum_{k=1}^\infty \frac{1}{k^{4}}\right\}^3},
\end{align*}
and
	\begin{align*}
\lim_{c\rightarrow\infty}\beta(c)&=\frac{8\pi}{27}\lim_{c\rightarrow\infty} \frac{\left\{\sum_{k=1}^\infty \frac{\exp\{-ck\}}{k^{3}}\right\}^4}{\left\{\sum_{k=1}^\infty \frac{\exp\{-ck\}}{k^{4}}\right\}^3}=0.
\end{align*}
So we can see that the range of $\beta(c)$ is $\left(0,\lim_{c\rightarrow 0}\beta(c)\right)$ on $c\in (0,\infty)$. Since we have assumed \eqref{Apery 2}, one finds
	$$
	0<\frac{\rho}{(3T)^3}< \lim_{c\rightarrow 0}\beta(c).
	$$
	Therefore, there is a one-to-one correspondence between $c$ and $\rho/(3T)^3$ of \eqref{v cgamma}, which guarantees the unique determination of $c$ and $\gamma$.
\end{proof}

\section{Proof of Theorem \ref{main} }
The proof of Theorem \ref{main} for the relativistic Maxwell-Boltzmann statistics is straightforward. Recall from Lemma \ref{form of J(F)} that $J(F)$ for the relativistic Maxwell-Boltzmann statistics is defined by
$$
J(F)=	\frac{\rho }{8\pi T^3} \exp\left\{-\frac{|v|}{T}\right\}
$$
with
$$
\rho =\int_{\mathbb{R}^3}F \,dv,\qquad 3 T=\frac{\int_{\mathbb{R}^3}|v| F \,dv}{\int_{\mathbb{R}^3}F \,dv}
$$
and it satisfies \eqref{cancel 2}:
\begin{equation}\label{conservation}
\int_{\mathbb{R}^3} J(F)-F\,\,dv=0,\qquad \int_{\mathbb{R}^3} |v|(J(F)-F)\,\,dv=0.
\end{equation}
In terms of \eqref{AW in RW 2}, this leads to the following conservation laws
\begin{equation}\label{cancell}
\frac{d}{dt}\int_{\mathbb{R}^3} F  \,dv=0,\qquad \frac{d}{dt}\int_{\mathbb{R}^3}|v| F  \,dv=0.
\end{equation}
Since we assumed
\begin{align*}
\int_{\mathbb{R}^3} F_0\,dv&=\int_{\mathbb{R}^3} J^0\,dv(=8\pi),\cr
 \int_{\mathbb{R}^3} |v|F_0\,dv&=\int_{\mathbb{R}^3} |v|J^0\,dv(=24\pi),
\end{align*}
combining \eqref{conservation} and \eqref{cancell} gives
$$
\rho=8\pi ,\qquad T=1,  
$$
and hence
$$
J(F)=J^0.
$$
Therefore \eqref{AW in RW 2} is explicitly solved as 
\begin{align*}
	F(t,v)&=\exp(-t)F_0(v)+\big(1-\exp(-t)\big)J^0.
\end{align*}

In the case of relativistic Bose-Einstein statistics, it is not obvious because, by Lemma \ref{form of J(F) 2} $J(F)$ is well-defined in a way to satisfiy \eqref{conservation} only when the solution $F$ satisfies \eqref{Apery 2}.
For this, we consider the following iteration scheme that for $n\ge 0$,
\begin{align}\label{v AW 2}\begin{split}  
		 &\hspace{0.5cm}\partial_tF^{n+1}= \frac{1}{\exp\{c^n+\gamma^n|v|\}-1}-F^{n+1},\cr	
		 &\hspace{1.5cm}\beta(c^n )=\frac{\rho(F^n) }{\left(3T(F^n) \right)^3},\cr
		 & \gamma^n =\left\{\rho(F^n)\right\} ^{-\frac{1}{3}}\left(\int_{\mathbb{R}^3} \frac{1}{\exp\left\{c^n +|v|\right\}-1}   \,dv\right)^{\frac{1}{3}}, 
\end{split} \end{align}
where we set $F^{n}(0,v)=F_0(v)$ and $c^n >0$.
Then it follows from \eqref{initial} that
$$
		\int_{\mathbb{R}^3} F^0\,dv=\int_{\mathbb{R}^3} J^0\,dv,\qquad \int_{\mathbb{R}^3} |v|F^0\,dv=\int_{\mathbb{R}^3} |v|J^0\,dv.
$$
Inserting $F^0$ into the definition of $\rho/(3T)^3$, this gives
\begin{equation*} 
0< \frac{\rho(F^0)}{\left\{3T(F^0)\right\}^3} =\frac{\left(\int_{\mathbb{R}^3}F^0\,dv\right)^4}{\left(\int_{\mathbb{R}^3}|v|F^0\,dv\right)^3}=\frac{\left\{\int_{\mathbb{R}^3} J^0\,dv \right\}^4}{\left\{\int_{\mathbb{R}^3} |v|J^0\,dv \right\}^3}=\beta(1).
\end{equation*}
Since $\beta(c)$ is strictly decreasing on $c\in(0,\infty)$ (see the proof of Lemma \ref{form of J(F) 2}), we have
$$
0< \frac{\rho(F^0)}{\left\{3T(F^0)\right\}^3}=\beta(1) <\lim_{c\rightarrow 0}\beta(c)=\frac{8\pi}{27}\frac{\left\{\sum_{k=1}^\infty \frac{1}{k^{3}}\right\}^4}{\left\{\sum_{k=1}^\infty \frac{1}{k^{4}}\right\}^3},
$$
which says that $F^0$ satisfies \eqref{Apery 2}. So we conclude by Lemma \ref{form of J(F) 2} that $c^0$ is equal to $1$ and
$$
\gamma^0=\left(\int_{\mathbb{R}^3}J^0\,dv\right)^{-\frac{1}{3}}\left(\int_{\mathbb{R}^3} \frac{1}{\exp\left\{1+|v|\right\}-1}   \,dv\right)^{\frac{1}{3}}=1,
$$
and hence
$$ 
\frac{1}{\exp\{c^0+\gamma^0|v|\}-1}=J^0.
$$
Thus $F^1$ of \eqref{v AW 2} satisfies
\begin{align*}
	\int_{\mathbb{R}^3}\left(\begin{matrix}
		1\cr
		|v|
	\end{matrix}\right)	F^1(t,v)\,dv &=\exp(-t)\int_{\mathbb{R}^3}\left(\begin{matrix}
		1\cr
		|v|
	\end{matrix}\right)F_0(v)\,dv+\big(1-\exp(-t)\big) \int_{\mathbb{R}^3} \left(\begin{matrix}
		1\cr
		|v|
	\end{matrix}\right)J^0\,dv\cr
	&=\exp(-t)\int_{\mathbb{R}^3}\left(\begin{matrix}
		1\cr
		|v|
	\end{matrix}\right)J^0\,dv+\big(1-\exp(-t)\big) \int_{\mathbb{R}^3}\left(\begin{matrix}
		1\cr
		|v|
	\end{matrix}\right)J^0\,dv\cr
	&= \int_{\mathbb{R}^3}\left(\begin{matrix}
		1\cr
		|v|
	\end{matrix}\right)J^0\,dv.
\end{align*}
Applying the same argument, we get 
$$
\frac{1}{\exp\{c^1+\gamma^1|v|\}-1}=J^0.
$$ 
Proceeding in the same manner leads to
\begin{align*}
	\int_{\mathbb{R}^3}\left(\begin{matrix}
		1\cr
		|v|
	\end{matrix}\right)	F^n(t,v)\,dv= \int_{\mathbb{R}^3}\left(\begin{matrix}
		1\cr
		|v|
	\end{matrix}\right)F_0\,dv,  
\end{align*}
and hence
$$
\frac{1}{\exp\{c^n+\gamma^n|v|\}-1}=J^0
$$
for all $n\in\mathbb{N}$. Therefore, the iteration \eqref{v AW 2} can be explicitly solved as
\begin{align*}
	F^{n+1}(t,v)&=\exp(-t)F_0(v)+\big(1-\exp(-t)\big)J^0
\end{align*}
which gives the desired result. 

%
%

 \noindent\newline
 {\bf Acknowledgements.}
 Byung-Hoon Hwang was supported by Basic Science Research Program through the National Research Foundation of Korea(NRF) funded by the Ministry of Education(No. NRF-2019R1A6A1A10073079). Ho Lee was supported by the Basic Science Research Program through the National Research Foundation of Korea (NRF) funded by the Ministry of Science, ICT \& Future Planning (NRF-2018R1A1A1A05078275). Part of this work was done while Ho Lee was visiting the Laboratoire Jacques-Louis Lions. Seok-Bae Yun was supported by Samsung Science and Technology Foundation under Project Number SSTF-BA1801-02.

\bibliographystyle{amsplain}

\end{document}